\documentclass[OJMO, manuscript,Unicode]{cedram}

\usepackage[all]{xypic}
\usepackage[dvipsnames]{xcolor}

 \newtheorem{thm}{Theorem}[section]
 
 {\rm}

 \newtheorem{defn}[thm]{Definition}{\rm}
 \newtheorem{assumption}[thm]{Assumption}
 \newtheorem{rem}[thm]{Remark}
 \newtheorem{ex}{Example}
\numberwithin{equation}{section}

\usepackage{comment}

\hypersetup{colorlinks=true, linkcolor=MidnightBlue, citecolor=MidnightBlue, urlcolor=MidnightBlue}

\let\vaccent\v
\def\v{\mathbf{v}} 

\DeclareRobustCommand{\vcaron}[1]{\vaccent{#1}}

\title[Finite convergence of the Moment--SOS hierarchy under hidden convexity]{Finite convergence of the Moment--SOS hierarchy under hidden convexity}
\author{\firstname{Sre\'{c}ko} \lastname{\DJ{}ura\vcaron{s}inovi\'{c}}}
\address{CNRS@CREATE Singapore and College of Computing and Data Science, Nanyang Technological University, Singapore\\
Create Tower $\#$08-01\\1 Create Way\\ Singapore 138602}
\email{srecko001@e.ntu.edu.sg}
\author{\firstname{Jean} \middlename{B.} \lastname{Lasserre}}
\address{LAAS-CNRS and Toulouse School of Economics (TSE)\\
University of Toulouse\\
LAAS, 7 avenue du Colonel Roche\\
31077 Toulouse C\'edex 4, France}
\email{lasserre@laas.fr}
\thanks{This research is supported by the AI Interdisciplinary Institute ANITI  funding through the 
ANITI AI Cluster program under the Grant agreement ANR-23-IACL-0002, and also by the Marie-Sklodovska-Curie European doctoral network TENORS, grant 101120296.
This research is also part of the program DesCartes and is supported by the National Research Foundation, Prime Minister's Office, Singapore under its Campus for Research Excellence and Technological Enterprise (CREATE) program.}
\keywords{Polynomial optimization; Moment--SOS hierarchy; Semidefinite relaxations; Hidden convexity.} 
\date{}

\begin{document}
\def\red{\color{red}}
\def\bl{\color{blue}}
\def\ora{\color{orange}}
\def\green{\color{green}}
\def\br{\color{brown}}

\def\la{\langle}
\def\ra{\rangle}
\def\e{{\rm e}}
\def\x{\mathbf{x}}
\def\by{\mathbf{y}}
\def\bz{\mathbf{z}}
\def\F{\mathcal{F}}
\def\R{\mathbb{R}}
\def\T{\mathbf{T}}
\def\N{\mathbb{N}}
\def\K{\mathbf{K}}
\def\bK{\overline{\mathbf{K}}}
\def\Q{\mathbf{Q}}
\def\M{\mathbf{M}}
\def\O{\mathbf{O}}
\def\C{\mathbf{C}}
\def\P{\mathbf{P}}
\def\Z{\mathbb{Z}}
\def\H{\mathbf{H}}
\def\A{\mathbf{A}}
\def\V{\mathbf{V}}
\def\W{\mathbf{W}}
\def\AA{\overline{\mathbf{A}}}
\def\B{\mathbf{B}}
\def\c{\mathbf{C}}
\def\L{\mathbf{L}}
\def\bS{\mathbf{S}}
\def\I{\mathbf{I}}
\def\Y{\mathbf{Y}}
\def\X{\mathbf{X}}
\def\G{\mathbf{G}}
\def\B{\mathbf{B}}
\def\f{\mathbf{f}}
\def\z{\mathbf{z}}
\def\y{\mathbf{y}}
\def\d{\hat{d}}
\def\bx{\mathbf{x}}
\def\y{\mathbf{y}}
\def\h{\mathbf{h}}
\def\g{\mathbf{g}}
\def\v{\mathbf{v}}
\def\tv{\tilde{\mathbf{v}}}
\def\g{\mathbf{g}}
\def\tg{\tilde{\mathbf{g}}}
\def\w{\mathbf{w}}
\def\b{\mathcal{B}}
\def\a{\mathbf{a}}
\def\q{\mathbf{q}}
\def\u{\mathbf{u}}
\def\s{\mathcal{S}}
\def\cc{\mathcal{C}}
\def\co{{\rm co}\,}
\def\cp{{\rm CP}}
\def\tx{\tilde{\x}}
\def\bP{\mathbb{P}}
\def\supmu{{\rm supp}\,\mu}
\def\supnu{{\rm supp}\,\nu}
\def\m{\mathcal{M}}
\def\bR{\mathbf{R}}
\def\om{\mathbf{\Omega}}
\def\c{\mathbf{c}}
\def\s{\mathcal{S}}
\def\k{\mathcal{K}}
\def\la{\langle}
\def\ra{\rangle}
\def\blambda{{\boldsymbol{\lambda}}}
\def\balpha{{\boldsymbol{\alpha}}}
\def\bgamma{{\boldsymbol{\gamma}}}
\def\bbeta{{\boldsymbol{\beta}}}
\def\bphi{{\boldsymbol{\phi}}}
\def\bmu{{\boldsymbol{\mu}}}
\def\bnu{{\boldsymbol{\nu}}}
\def\HM{\mathbf{HM}}
\def\tHM{\widetilde{\mathbf{HM}}}
\def\tbmu{\tilde{\bmu}}

\newcommand{\srecko}[1]{{\color{black}#1}}

\begin{abstract}
We consider polynomial optimization problems
with compact feasible set $\om\subset\R^d$ defined by SOS-concave polynomials
$g_j$ of arbitrary degree, {\color{black} and whose objective function $f$ is not necessarily convex on $\R^d$}.
\srecko{We show that, if $f$ is Hessian-$Q$-module convex over $\om$ in the sense that its Hessian
admits a specific quadratic-module representation, then the standard Moment–SOS hierarchy converges in finitely many steps
without prior knowledge of this hidden (local) convexity. Strong convexity of $f$ on $\om$ is a sufficient condition for the required
Hessian representation. In addition, we give an explicit relaxation order at which exactness occurs.} This demonstrates that a general-purpose hierarchy can adapt to favorable hidden properties of a specific instance without being informed of them, yielding certified global minimizers.
\end{abstract}
\maketitle
\section{Introduction}
\srecko{Given multivariate polynomials $f,g_1,\dots g_m\in\R[\x]$, with $\x=(x_1,\dots,x_d)$, we consider the following polynomial optimization problem (POP)} 
\srecko{
\begin{eqnarray}
 \label{def-pb}
 f^*\,=\,\min_{\x\in\om} \,f(\x), \quad \om=\{\,\x\in\R^d: g_j(\x)\geq0\,,\:j\in\{1,\ldots,m\}\}.
\end{eqnarray}}
\srecko{We assume throughout that $\om$ is compact and that each $-g_j$ is
SOS-convex. Recall that a polynomial $p\in\mathbb{R}[\x]$ is said to be (\textit{globally}) \emph{SOS-convex} if its Hessian is an SOS matrix, that is, if there exists a
polynomial matrix $\mathbf{M}(\x)$ such that $\nabla^2 p(\x)
=
\mathbf{M}(\x)^\top \mathbf{M}(\x)$, for all $\x\in\mathbb{R}^d$. Since SOS-convexity is a tractable sufficient condition for convexity, our
assumptions imply that each constraint-defining polynomial $g_j$ is concave. Consequently,
each superlevel set $\{\x\in\R^d\,:\,g_j(\x)\geq0\}$ is convex, and therefore the feasible
set $\om$ is convex.
\hfill\break\\
The main distinguishing feature of our setting is that the objective
$f$ is assumed to be Hessian-$Q$-module convex over $\om$ (in a sense that will be defined soon), but not necessarily convex on the entire ambient space
$\R^d$, i.e., it may be globally nonconvex. Therefore, the input polynomial data do not necessarily
define a ``convex program'' on $\R^d$. In addition, the explicit certificate of this local convexity property might not be readily available from the problem description only, which is why we use the term \textit{hidden convexity} to describe this scenario.  }
\hfill\break\\
\srecko{The distinction between convexity of $f$ on $\om$ and convexity of $f$ on $\R^d$ matters for global certification.}
Indeed, suppose that Slater's condition holds. Then
the KKT-optimality conditions 
\begin{eqnarray}
\label{KKT}
\nabla f(\x^*)-\sum_{j=1}^m\lambda^*_{j}\nabla g_j(\x^*)\,=\,{\bf 0},\:\lambda^*_j\,g_j(\x^*)=0,\:j\in\{1,\dots,m\}\,,\:\blambda^*\geq{\bf 0},
\end{eqnarray}
must hold at some local minimizer $\x^*\in\om$ (for some $\blambda^*\geq{\bf 0}$).
\\
\srecko{If $f$ is convex on the entire space $\R^d$, then we obtain that}, 
\begin{eqnarray}
\label{Lagrangian}
\mathcal{L}(\x,\blambda^*)-f^*\,:=\,f(\x)-f^*-\sum_{j=1}^m\lambda^*_j\, g_j(\x)\,\geq\,0,\quad\forall\x\in\R^d.
\end{eqnarray}
More precisely, because $\mathcal{L}(\cdot,\blambda^*)$  is convex and 
$\nabla_{\x}\mathcal{L}(\x^*,\blambda^*)=0$, the point $\x^*$ is also a global minimizer of the 
Lagrangian $\mathcal{L}(\cdot,\blambda^*)$ on the whole $\R^d$.\\
\srecko{If $f$ is only convex on $\om$, this conclusion generally fails, i.e., $\mathcal{L}(\cdot,\blambda^*)$ 
is not necessarily convex on $\R^d$ (typically when $\displaystyle\mathrm{deg}(f)>\max_{j\in\{ 1,\dots,m\}}\{\mathrm{deg}(g_j)\}$). In other words, the KKT conditions still characterize the
constrained minimizer, but they do not imply that
$\x^\star$ minimizes $\mathcal L(\cdot,\blambda^\star)$ over the whole space.} 
For a simple univariate illustration, consider the following POP:  $f^*=\min\{\,x^3:\:x(1-x)\geq0\}$
with $(x^*,\lambda^*)=(0,0)$.
\hfill\break\\
\srecko{A local optimization method may return a feasible point
$\x^\star\in\om$ satisfying the KKT conditions. If $\om$ is convex and $f$ is
known to be convex on $\om$, then every local minimizer is globally optimal.
However, when the convexity of $f$ on $\om$ is hidden, the output of a local
method alone \textit{does not} certify global optimality. Moreover, even deciding
whether a polynomial is globally convex is NP-hard \cite{Ahmadi}, so verifying
such convexity explicitly may itself be computationally intractable. This
motivates methods that can exploit hidden convexity, i.e., the one that is not
apparent from the polynomial description of the problem.}
\hfill\break\\
\srecko{In this paper, we analyze  one such method, namely the standard hierarchy of Moment--SOS relaxations associated with
\eqref{def-pb}. At each relaxation order $n$, the original polynomial
optimization problem is replaced by a semidefinite relaxation which returns a lower bound $\rho_n\leq f^\star$. Under
the Archimedean assumption, the sequence $(\rho_n)_n$ is monotone
nondecreasing and converges asymptotically to $f^\star$ \cite{lass-2001}. The precise
semidefinite formulation is given in Section \ref{subsec: mom}.
\\
Nie showed that finite convergence of the sequence $(\rho_n)_n$ holds generically, without requiring convexity of either the objective or
the constraints \cite{Nie}. For a given POP instance, however, checking
the relevant genericity conditions may be computationally prohibitive.
More generally, deciding whether the Moment--SOS hierarchy converges finitely
is itself a hard problem, as shown by Vargas \cite{vargas}.}
\hfill\break\\
Of course, when the information that  
$f$ and $-g_j$ are convex is known \`a priori, 
some algorithms can be more efficient than the Moment--SOS hierarchy, but the interesting observation is 
when the convexity of $f$ is \emph{hidden}. 
\\
Remarkably, the Moment--SOS hierarchy blindly \textit{recognizes} and exploits convexity properties of certain POP instances, without any \emph{\`a priori} knowledge of these properties, thereby ensuring finite convergence of the resulting sequence of lower bounds.
 It is a nice feature of a general purpose method to be able to adapt its behavior by benefiting from strong properties of some specific instances, without being ``informed'' of such properties.
\subsection*{Related work}
The behavior of the Moment--SOS hierarchy is well understood for several
classes in which convexity is global. If $f$ and $-g_j$ are SOS-convex on
$\R^d$, a Jensen inequality for positive Riesz functionals yields exactness
at the first semidefinite relaxation of the hierarchy \cite{lass-conv}. 
\\
For
convex polynomial programs that are not necessarily SOS-convex, finite
convergence follows under additional nondegeneracy assumptions,  but without
knowing at which step of the hierarchy convergence takes place \cite{deklerk-laurent}. More precisely, \cite[Corollary 3.3]{deklerk-laurent} establishes the finite convergence  of the Moment--SOS hierarchy
for convex programs  
when (i) Slater's condition holds, (ii) the quadratic module associated with the $g_j$'s is Archimedean, and (iii) $\nabla^2f(\x^*)\succ0$ at every minimizer $\x^*$. 
Also, if
$\om\subset\B(0,R)$, the same result holds under conditions (ii) and (iii), but now satisfied by
the Lagrangian $\mathcal{L}$ rather than by $f$. Two key ingredients in \cite{deklerk-laurent} 
are a result of Scheiderer \cite{scheiderer} and the (convex) Farkas Lemma 
\cite[Prop. 5.3.1]{bertsekas}. 
\\
Finally, it is shown in \cite{lass-rep} 
that 
if both $f$ and 
$\om$ are convex (without the $g_j$'s being necessarily concave), all points that satisfy 
the KKT-optimality conditions are global minimizers. 
\hfill\break\\
\srecko{Prior works, therefore, do not cover the scenario when the
objective is convex only on $\om$ and may be nonconvex away from the feasible
set, which is exactly the regime considered here.}
\subsection*{Contribution}
\srecko{Let $g_0:\x\to 1$ be the constant polynomial. We start by introducing the notion of \emph{Hessian-$Q$-module convexity over $\om\subset\R^d$}. Namely, we say that a polynomial 
$f\in\R[\x]$ is Hessian-$Q$-module convex over $\om$ if}
\begin{eqnarray}
\label{intro-matrix-put}
\nabla^2f(\x)\,=\,\sum_{j=0}^m\L_j(\x)\L_j(\x)^\top\,g_j(\x),\quad\forall\x\in\R^d,\end{eqnarray}
for some integers $r_{j}\in\N$
and  matrix polynomials $\L_j\in\R[\x]^{d\times r_{j}}$, $j\in\{0,\ldots,m\}$.\\
This definition is a natural extension of global SOS-convexity on $\R^d$ (introduced in
\cite{helton}, see also \cite{lass-conv})
to local SOS-convexity on $\om$. \srecko{It is also closely related to the notion of $Q$-module convexity introduced
in \cite{Nie-2012} for studying semidefinite representability of some convex sets (not for optimization). The condition in \eqref{intro-matrix-put} is a
second-order positivity certificate expressed through the Hessian, whereas
$Q$-module convexity is formulated through an SOS representation of the
first-order remainder
$R_f: (\x,\y)\to f(\x)-f(\y)-\nabla f(\y)^\top(\x-\y)$.
A detailed comparison between these two notions is provided in
Section \ref{sec: discussion}.}
\hfill\break\\
Notice that \eqref{intro-matrix-put} can be viewed as a Putinar-type \emph{algebraic certificate} that 
$f$ is  convex on $\om$.
In particular, if $f$ is strongly convex on $\om$, then by
the Putinar's matrix Positivstellensatz \cite[Theorem 2.19]{lass-CUP}, \eqref{intro-matrix-put} holds and so $f$ is Hessian-Q-module convex over $\om$. In principle,
checking whether \eqref{intro-matrix-put} holds can be done efficiently by solving a semidefinite program of finite size determined by an \`a priori bound on the maximum degree
appearing in entries of matrix polynomials $\L_j(\x)$.
\hfill\break\\
We emphasize that the POPs considered here define convex optimization problems, but not convex programs in the standard sense, since $f$ is not convex on the whole $\mathbb{R}^d$.
\hfill\break\\
\srecko{Our main result states that when $f$ is Hessian-$Q$-module convex over $\om$ (i.e., \eqref{intro-matrix-put} holds), the  Moment--SOS hierarchy converges in finitely many steps. In addition,  an explicit lower bound on the order $n$ of an exact relaxation is given by
\begin{eqnarray}
   \displaystyle 1+\max\left\{\max_{j\in\{1,\dots,m+1\}}\{a_j+d_j\},a_0\right\},\,\text{with}\, a_j:=\max_{(k,l)\in\{1,\dots,d\}\times\{1,\dots,r_j\}}\deg(\L_j{(k,l)}),
\end{eqnarray}
where $\L_j{(k,l)}$ denotes the polynomial entry $(k,l)$ of the matrix $\L_j$ from \eqref{intro-matrix-put}, and $\displaystyle d_j=\left\lceil\frac{\deg g_j}{2}\right\rceil$.}
\hfill\break\\
\srecko{\emph{A key feature of our result is that finite convergence does not require
\emph{à priori} verification of the underlying convexity structure: one need
not check that $f$ is strongly convex on $\Omega$, nor that
\eqref{intro-matrix-put} holds. The relevant convexity may therefore remain
\emph{hidden}. Nevertheless, the standard, unmodified Moment--SOS hierarchy
\eqref{intro-relax} still converges finitely. Our result thus identifies a
distinct mechanism for finite convergence, arising from convexity certified
only on the feasible set, even though this certificate is neither known nor
used in constructing the hierarchy.}}
\hfill\break\\
Crucial for our proofs and of independent interest, we provide a Jensen-type inequality (see Theorem \ref{lem1}) for
polynomials that are \srecko{Hessian-$Q$-module convex over $\om$}, and linear functionals $\phi$ whose associated  localizing matrices $\M_{n-d_j}(g_j\cdot\bphi)$, $j\in\{0,\ldots,m\}$, are positive semidefinite (thus an extension beyond probability measures on $\om$, but for a restricted class of convex functions).
\hfill\break\\
We emphasize once again that, \emph{if one knows that $f$ is convex on 
$\om$}, then running a local algorithm that converges to a KKT point $\x^*\in\om$ might be enough, because 
 \eqref{KKT} provides $\x^*$ with a certificate of global optimality on $\om$, and there is no need to run
the Moment--SOS hierarchy. But if one is unable to check beforehand whether 
$f $ is convex on $\om$ (an NP-hard problem), then the local KKT-optimality conditions are not sufficient 
to ensure that $\x^*$ is a global minimizer. %

\section{Preliminaries}

\subsection{Notation and main definitions}
Let $\R[\x]=\R[x_1,\ldots,x_d]$ be the ring of $d$-variate polynomials in the variables
$\x=(x_1,\ldots,x_d)$, and let $\R[\x]_{n}\subset\R[\x]$ be the vector space of polynomials of total degree at most $n$. Let $\Sigma[\x]$ be the cone of sum of squares (SOS) polynomials and let $\Sigma[\x]_n := \Sigma[\x] \cap
\R[\x]_{2n}$.
For $\balpha\in\N^d$, let
$\displaystyle\vert\balpha\vert:=\sum_{i=1}^d\alpha_i$, and denote by  $\N^d_n=\{\balpha\in\N^d: \vert\balpha\vert\leq n\}$ the finite set of multi-indices of size $s(n):=\binom{d+n}{n}$. Let $\v_n(\x)=(\x^{\balpha})_{\balpha\in\N^d_n}$
be the vector of all monomials with total degree bounded by $n$. 
For a real symmetric matrix $\A$,
the notation $\A\succeq{\bf0}$ (resp. $\A\succ{\bf0}$) means that $\A$ is positive semidefinite
(resp. positive definite). 
A polynomial $f\in\mathbb{R}[\x]_n$ is written $\displaystyle \x\mapsto f(\x)\,=\,\sum_{\alpha\in\N^d}f_{\balpha}\,\x^{\balpha}\,=\,\la\f,\v_n(\x)\ra\,,$ with vector of coefficients $\displaystyle \f=(f_{\balpha})_{\balpha\in\N^d_n}\in\R^{s(n)}$ in the basis $\v_n(\x)$. \srecko{The} Euclidean ball centered at origin and having radius $R>0$ is denoted by $\B({\bf 0},R)$.
\subsection*{Riesz linear functional.}
Given a sequence $\bphi=(\phi_{\balpha})_{\balpha\in\N^d}$ (in bold), its associated Riesz functional is the linear mapping
$\phi:\mathbb{R}[\x]\to\mathbb{R}$ (not in bold) defined by:
\begin{eqnarray}
\label{Riesz}
f\:(=\sum_{\balpha} f_{\balpha}\,\x^{\balpha})\quad \mapsto \phi(f)\,=\,\sum_{\balpha\in\N^d}f_{\balpha}\,\phi_{\balpha}\,=\,\langle\f,\bphi\rangle.
\end{eqnarray}
A sequence $\bphi$ has a \emph{representing} measure if its associated Riesz linear functional $\phi$ is a (positive) Borel measure on $\R^d$, in which case,
\begin{eqnarray}
\phi_{\balpha}\,=\,\int_{\R^d}\x^{\balpha}\,d\phi,\quad\forall \balpha\in\N^d.
\end{eqnarray}
Given a sequence $\bphi=(\phi_{\balpha})_{\balpha\in\N^d}$ and a polynomial $\R[\x]\ni g:\x\mapsto g(\x)=\sum_{\bgamma}g_{\bgamma}\,\x^{\bgamma}$, we define the new sequence
$g\cdot\bphi$ via
\begin{eqnarray}
(g\cdot\bphi)_{\balpha}\,:=\,\phi(\x^{\balpha}\,g)\,=\,\sum_{\bgamma\in\N^d}g_{\bgamma}\,\phi_{\balpha+\bgamma}\,,\quad\forall\balpha\in\N^d,
\end{eqnarray}
so that its associated Riesz linear functional, denoted by  $g\cdot\phi$, satisfies
\begin{eqnarray}
    \srecko{(}g\cdot\phi\srecko{)}(f)\,=\,\phi(g\,f),\quad \forall f\in\R[\x].
\end{eqnarray}
\subsection*{Moment matrix.}
The {\it moment} matrix of order $n\in\N$ associated with a sequence
$\bphi=(\phi_{\balpha})_{\balpha\in\N^d}$ (or, equivalently, with the Riesz linear functional $\phi$), 
is the real symmetric matrix of size $s(n)\times s(n)$ denoted by $\M_n(\bphi)$ (or $\M_n(\phi)$)
with rows and columns indexed by $\N^d_n$, and whose entry
$(\balpha,\bbeta)$ corresponds to $\phi(\x^{\balpha+\bbeta})=\phi_{\balpha+\bbeta}$, for every $\balpha,\bbeta\in\N^d_n$.
Therefore, $\M_n(\bphi)$ depends only on moments $\phi_{\balpha}$ of degree at most $2n$.
Moreover, if $\bphi$ has a representing measure $\phi$, then
$\M_n(\bphi)\succeq{\bf0}$ because $\langle\f,\M_n(\bphi)\,\f\rangle=\int f^2d\phi\geq0$, for all $f\in\mathbb{R}[\x]_n$.


\subsection*{Localizing matrix.}
With $\bphi$ as above and $g\in\mathbb{R}[\x]$ (with $g(\x)=\sum_\gamma g_\gamma\x^\gamma$), the \textit{localizing} matrix associated with $\bphi$
and $g$ is the moment matrix $\M_n(g\cdot\bphi)$ associated with the sequence $g\cdot\bphi$. That is,
$\M_n(g\cdot\bphi)$ is the real symmetric matrix 
with rows and columns indexed by $\N^d_n$, and whose entry $(\balpha,\bbeta)$ is just 
$(g\cdot\bphi)_{\balpha+\bbeta}=\phi(\x^{\balpha+\bbeta}g)$, that is,
$\M_n(g\cdot\bphi)(\balpha,\bbeta)=\sum_{\gamma}g_{\bgamma} \phi_{\balpha+\bbeta+\bgamma}$, for every $\balpha,\bbeta\in\N^d_n$.
If $\bphi$ has a representing measure $\phi$ whose support is contained in the set $\{\x\in\R^d:g(\x)\geq0\}$ then $\M_n(g\cdot\bphi)\succeq{\bf0}$ for all $n$, because 
\begin{eqnarray} \langle\f,\M_n(g\cdot\bphi)\,\f\rangle\,=\,\srecko{(}g\cdot\phi\srecko{)}(f^2)\,=\,\phi(f^2g)\,=\,\int f^2\,g\,d\phi\,\geq\,0,\quad\forall f\,\in\mathbb{R}[\x]_n.
\end{eqnarray}
\subsection*{The Moment--SOS hierarchy}\label{subsec: mom}
\srecko{Let $d_j=\lceil\mathrm{deg}(g_j)/2\rceil$, $j\in\{0,\ldots,m\}$, $d_f=\lceil\mathrm{deg}(f)/2\rceil$ and let $n\geq n_{\min}:=\max[d_f,\max_jd_j]$,.} The \srecko{order}-$n$ semidefinite relaxation of Moment--SOS hierarchy to solve the POP \eqref{def-pb} corresponds to:
\begin{eqnarray}
    \label{intro-relax}
\rho_ n=\min_{{\bphi}\in\R^{\N^d_{2n}}}\,\{\,\phi(f):\,\phi(1)=1,\:\M_{n-d_{j}}(g_j\cdot\bphi)\succeq {\bf 0},\:j\in\{0,\ldots,m\}\}.\end{eqnarray}
Then, $\rho_n\leq f^*$ for 
all $n\geq\max[d_f,\max_jd_j]$, and if the quadratic module defined by the $g_j$'s is Archimedean (see discussion around \eqref{eq:truncquadmod}),  $\rho_n\uparrow f^*$ as $n\to+\infty$. \srecko{We say that the hierarchy has \emph{finite convergence}, or that
the order-$n$ relaxation is \emph{exact}, if $\rho_n=f^*$ for some finite
$n$. A standard sufficient à posteriori certificate of exactness is the
flat truncation condition \cite{CurtoFialkow}. More precisely, if an optimal solution
$\bphi^{(n)}$ of \eqref{intro-relax} satisfies, for some
$t\in\{n_{\min},\ldots,n\}$,
\begin{eqnarray}\label{eq:flat-truncation}
\operatorname{rank}\M_t(\bphi^{(n)})
=\operatorname{rank}\M_{t-n_{\min}}
(\bphi^{(n)}),
\end{eqnarray}
then the corresponding truncated sequence admits a $t$-atomic
representing measure supported on $\om$ whose atoms can be extracted and are
global minimizers of $f$ on $\om$.} For more details on the Moment--SOS hierarchy (its generic finite convergence and other properties), the interested reader is referred to \cite{lass-CUP, acta, Nie}.

\subsection*{SOS-convex polynomials.}
\begin{defn}
\label{def-0} A polynomial $p\in\R[\x]$ is \emph{(globally) SOS-convex} if its Hessian $\nabla^2p$ is a matrix-SOS.
That is, there exists a matrix polynomial $\L\in\R[\x]^{d\times r}$ such that
$\nabla^2 p(\x)=\L(\x)\,\L(\x)^T$, for all $\x\in\R^d$. 
A polynomial $p$ is SOS-concave if $-p$ is SOS-convex. 
\end{defn}
\begin{defn}
\label{def-1}
With $\om\subset\R^d$ as in \eqref{def-pb} (hence convex), a  polynomial $f\in\R[\x]$ is \srecko{\emph{Hessian-$Q$-module convex over $\om$}} if its Hessian $\nabla^2f$ satisfies
\begin{eqnarray}
\label{def-sos-om}
\nabla^2 f(\x)\,=\,\sum_{j=0}^m\L_{j}(\x)\,\L_{j}(\x)^\top\,g_{j}(\x),
\quad\forall \x\in\R^d,
\end{eqnarray}
for some matrix polynomials $\L_{j}\in\R[\x]^{d\times r_j}$, with $r_j\in\N$ for $j\in\{0,\dots,m\}$. 
\end{defn}

\subsection*{Quadratic module}
\begin{defn}\label{def:quadmod}
   Let
$\g =(g_0,g_1,\ldots,g_m)\in\R[\x]^{m+1}$. The set $Q(\g)\subset\R[\x]$ defined by
\begin{eqnarray}
\label{quad-module}
Q(\g)\,:=\left\{\sum_{j=0}^{m}\sigma_{j}\,g_{j}:\:\sigma_{j}\in\Sigma[\x],\,j\in\{0,\dots,m\}\right\}.
\end{eqnarray}
is called a quadratic module  generated by $g_1,\ldots,g_m$.  Its truncated version of order $n\in\N$, namely
$Q_{n}(\g)\subset\R[\x]_{n}$, reads
\begin{eqnarray}\label{eq:truncquadmod}
    Q_{n}(\g)\,:=\left\{\sum_{j=0}^{m}\sigma_{j}\,g_{j}:\:\sigma_{j}\in\Sigma[\x],\:\mathrm{deg}(\sigma_jg_j)\,\leq n,\:j\in\{0,\ldots,m \}\right\}.
\end{eqnarray}
\end{defn}
{\color{black}The quadratic module $Q(\g)$ is said to be Archimedean if there exists $R>0$ such that $R^2-||\x||^2\in Q(\g)$ (therefore, $\om$ is necessarily compact).}
\\
If $f$ is strongly convex on $\om$ \footnote{A polynomial $f$ is strongly convex on $\om$ if and only if there exists $\mu>0$ such that $\nabla^2f
(\x) - \mu {\bf I} \succeq {\bf 0}$ for all $\x\in\om$.} and the quadratic module $Q(\g)$ is Archimedean, then by Putinar's matrix Positivstellensatz (\srecko{see \cite[Corollary 1]{SchererHol} or  \cite[Theorem 2.22]{lass-CUP}}), equation \eqref{def-sos-om} holds, that is, $f$ is Hessian-$Q$-module convex over $\om$.

\section{Main result}
Since $\om$ is compact, we may and will assume that
$\om\subset \B({\bf0},1)$ possibly after re-scaling. Hence, we can include in the definition of 
$\om$ the redundant quadratic constraint $g_{m+1}(\x)\geq0$, with $g_{m+1}:\x\mapsto 1-\Vert\x\Vert^2$ (which is SOS-concave).
This ensures that the quadratic module $Q(\g)$ in \eqref{quad-module} is Archimedean.
Therefore, throughout the paper, we assume the following:
{%
\renewcommand{\theenumi}{(\roman{enumi})}%
\renewcommand{\labelenumi}{\theenumi}%
\begin{assumption}\label{ass-1}
\begin{enumerate}
\item The set $\om\subset\R^d$ in \eqref{def-pb} is compact, and $g_{m+1}$ is one of the defining constraints of $\om$;
\item  For every $j\in\{1,\ldots,m+1\}$, the constraint $g_j$ is SOS-concave  (hence $\om$ is convex);
\item  Objective function $f$ is \srecko{Hessian-$Q$-module convex over $\om$} (i.e., it satisfies \eqref{def-sos-om}), but $f$ is \emph{not} necessarily convex on $\R^d$. \srecko{Recall that, in this case, if  $\L_j{(k,l)}$ denotes the polynomial entry $(k,l)$ of the matrix $\L_j$ from \eqref{def-sos-om}, then we set $\displaystyle a_j:=\max_{(k,l)\in\{1,\dots,d\}\times\{1,\dots,r_j\}}\deg(\L_j{(k,l)})$.}
\end{enumerate}
\end{assumption}
}

\subsection{A Jensen inequality for linear functionals}
Before stating Theorem \ref{lem1}, the main result underpinning our analysis of the finite convergence of the Moment--SOS hierarchy under Assumption \ref{ass-1}, we make the following remark.
\srecko{\begin{rem}[Feasibility of the first-order pseudo-moment vector]\label{rem1}
    Let
$\bphi$ be feasible for the order-$n$ moment relaxation \eqref{intro-relax} and define the first-order pseudo-moment vector $\phi({\x}):=(\phi(x_1),\dots,\phi(x_d)).$ For each $j\in\{0,\dots,m+1\}$, the localizing constraint ${\bf M}_{n-d_j}(g_j\cdot\bphi)\succeq {\bf 0}$
implies $\phi(g_j)\geq0.$
Moreover, since $g_j$ is SOS-concave, the Jensen inequality for SOS-concave
polynomials \cite[Theorem 5.13]{lass-CUP} gives $0\leq\phi(g_j)\leq g_j(\phi(\x)).$
Consequently,  $\phi(\x)\in\om$.
\end{rem}}

{\color{black}
\begin{thm}
\label{lem1}
Let Assumption \ref{ass-1} hold, 
and  let $\bphi=(\phi_\balpha)_{\balpha\in\N^d_{2n}}$ be such that
\begin{eqnarray}
\label{on-phi}
\phi(1)=1\,\text{and}\,\:\M_{n-d_j}(g_j\cdot\bphi)\,\succeq {\bf 0},\quad j\in\{0,\ldots,m+1\}.\end{eqnarray} 
Then, as soon as $\displaystyle n\geq 1+\max\left\{\max_{j\in\{1,\dots,m+1\}}\{a_j+d_j\},a_0\right\}$, we have 
\begin{eqnarray}\label{Jensen}
    \phi(f)\,\geq\,f(\phi(\x)), \quad\text{where}\quad \phi(\x):=(\phi(x_1),\ldots,\phi(x_d)).
\end{eqnarray}
\end{thm}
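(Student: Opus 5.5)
Set $\y:=\phi(\x)$, which lies in $\om$ by Remark \ref{rem1}. The plan is to write $f$ as its first-order Taylor expansion at $\y$ plus an integral remainder, and then show that $\phi$ is nonnegative on that remainder. This mirrors the proof of Jensen's inequality for SOS-convex polynomials \cite[Theorem 5.13]{lass-CUP}, except that the Hessian is now an element of the matrix quadratic module rather than a plain SOS matrix.

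By Taylor's formula with integral remainder, for all $\x\in\R^d$,
\begin{eqnarray*}
f(\x)=f(\y)+\nabla f(\y)^\top(\x-\y)+\int_0^1(1-t)\,(\x-\y)^\top\nabla^2 f(\y+t(\x-\y))\,(\x-\y)\,dt .
\end{eqnarray*}
Apply $\phi$ to both sides. Since $\phi(1)=1$ and $\phi(\x-\y)=0$, the linear term vanishes and $\phi(f)=f(\y)+\phi(R)$, where $R$ is the remainder polynomial. Substitute \eqref{def-sos-om} at the point $\z_t:=\y+t(\x-\y)$. Then
\begin{eqnarray*}
R(\x)=\sum_{j=0}^{m}\int_0^1(1-t)\,\Vert\L_j(\z_t)^\top(\x-\y)\Vert^2\,g_j(\z_t)\,dt ,
\end{eqnarray*}
and it suffices to prove $\phi(R)\ge 0$. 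Swapping $\phi$ with the $t$-integral is legitimate because everything is polynomial in $(\x,t)$. So it is enough to show, for each fixed $t\in[0,1]$ and each $j$, that
\[
\phi\big(\Vert \L_j(\z_t)^\top(\x-\y)\Vert^2 g_j(\z_t)\big)\ge 0 .
\]

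For $j=0$ the quantity is $\phi$ applied to a sum of squares of polynomials in $\x$ of degree at most $a_0+1\le n$. Nonnegativity then follows from $\M_n(\bphi)\succeq\mathbf{0}$.

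For $j\ge1$ there is a real difficulty, and I expect it to be the main obstacle: the localizing condition controls $\phi(\sigma g_j(\x))$, but here we have $g_j(\z_t)$ with $\z_t=(1-t)\y+t\x$, not $g_j(\x)$. I would use SOS-concavity of $g_j$. For fixed $\y$, the polynomial
\[
h(\x):=g_j((1-t)\y+t\x)-(1-t)g_j(\y)-t\,g_j(\x)
\]
is nonnegative, and moreover it is an SOS in $\x$. Indeed, for an SOS-convex $p$ the gap $tp(\x)+(1-t)p(\y)-p(t\x+(1-t)\y)$ can be written as a Taylor integral remainder of the form $\int v^\top\nabla^2p\,v$, and this is SOS in $(\x,\y)$ with an explicit degree bound (Helton--Nie). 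Hence
\[
g_j(\z_t)=(1-t)g_j(\y)+t\,g_j(\x)+h(\x),
\]
with $g_j(\y)\ge0$ because $\y\in\om$, and $h$ SOS. Multiplying by $s(\x):=\Vert\L_j(\z_t)^\top(\x-\y)\Vert^2$, which is an SOS of degree at most $2(a_j+1)$, and applying $\phi$ splits the expression into three terms:
\begin{itemize}
\item $(1-t)g_j(\y)\phi(s)\ge0$, by $\M_n(\bphi)\succeq\mathbf{0}$;
\item $t\,\phi(s\,g_j)\ge0$, by $\M_{n-d_j}(g_j\cdot\bphi)\succeq\mathbf{0}$, which needs $a_j+1\le n-d_j$;
\item $\phi(s\,h)$, where $s h$ is a product of SOS, hence SOS, and is nonnegative under $\phi$ provided its degree is at most $2n$.
\end{itemize}

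The degree bookkeeping is exactly what should produce the threshold $n\ge 1+\max\{a_j+d_j,a_0\}$. The point I would check most carefully is the degree of $s h$: $h$ has degree at most $2d_j$, and each of its squares has degree at most $d_j$ when expressed via the Hessian factorization of $-g_j$. That gives $\deg(sh)\le 2(a_j+1+d_j)\le 2n$, as required.

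Summing over $j$ and integrating over $t$ gives $\phi(R)\ge0$, and hence $\phi(f)\ge f(\phi(\x))$.
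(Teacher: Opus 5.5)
Your proof is correct and follows essentially the same route as the paper: a Taylor expansion with integral remainder at $\phi(\x)\in\om$, substitution of the Hessian certificate along the segment, the SOS decomposition of the Jensen gap of each SOS-concave $g_j$, and the same three-term split with the degree condition $a_j+1+d_j\le n$. The only difference is cosmetic. The paper replaces the $t$-integral by a positive-weight quadrature rule that is exact in degree $2d_f-1$, whereas you interchange $\phi$ with the integral directly by linearity; this is equally valid because every term has $\x$-degree at most $2n$.
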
}
{\color{black}
\begin{proof}
Let $\z\in\om$ be fixed, arbitrary. As $\phi$ is linear and satisfies $\phi(1)=1$, we get $\phi(f-f(\z))=\phi(f)-f(\z)$. We will prove that
 \begin{eqnarray}
  \phi(f)-f(\z)=\phi(f-f(\z))\,\geq\,\phi(\la\nabla f(\z),\x-\z\ra)\:=\,\la\nabla f(\z),\phi(\x)-\z\ra,   
 \end{eqnarray}
 and therefore, using Remark \ref{rem1} substituting $\z$ with $\phi(\x)\in\om$, one obtains the desired result \eqref{Jensen}.
 \\
 We start by writing the second-order Taylor formula with integral remainder, which has previously been
used in the SOS-convexity literature  (see, for example,
\cite{helton,Ahmadi2}), namely: \begin{eqnarray}\label{eq:integralTaylor}
 f(\x)-f(\z)=
 \la\nabla f(\z),\x-\z\ra
 +\Bigg\langle(\x-\z),\left(\int_0^1(1-t)\,\nabla^2f(\z+t(\x-\z))\,dt\right)\,(\x-\z)\Bigg\rangle.\end{eqnarray}
 As the maximum degree (in $t$) of the entries of the integrand
 $(1-t)\nabla^2f(\z+t(\x-\z))$ is $2d_f-1$, the entire integral from \eqref{eq:integralTaylor} may be computed using a cubature formula for Lebesgue measure on $[0,1]$, supported on $s$ points $t_1,\ldots t_s\in [0,1]$, with weights $\bgamma>{\bf0}$. The formula being exact for polynomials 
 of degree $2d_f-1$, we obtain
  \begin{eqnarray}
  f(\x)-f(\z)=\la\nabla f(\z),\x-\z\ra+\sum_{k=1}^s\gamma_{k}
 \Bigg\langle(\x-\z),\left((1-t_k)\,\nabla^2f(\z+t_k\,(\x-\z))\right)\,(\x-\z)\Bigg\rangle.\end{eqnarray}
 Next, since for $\x,\z\in\om$ and each $k\in\{1,\ldots,s\}$, the point $\z+t_k(\x-\z)$ belongs to $\om$, Assumption \ref{ass-1} yields
  \begin{eqnarray}
 \nabla^2f(\z+t_k\,(\x-\z))=\sum_{j=0}^{m+1}\L_j(\z+t_k(\x-\z))\L_j(\z+t_k(\x-\z))^\top g_j(\z+t_k(\x-\z)).
 \label{aux-100}
 \end{eqnarray}
 Substituting \eqref{aux-100} into \eqref{eq:integralTaylor}, we get 
 \begin{eqnarray}
 \label{taylor-sum}
f(\x)-f(\z)-\nabla f(\z)^\top(\x-\z)
=\sum_{k=1}^s\gamma_k(1-t_k)
\sum_{j=0}^{m+1}\sigma_{kj}(\x,\z)
g_j(\z+t_k(\x-\z)),    
 \end{eqnarray}
where $\sigma_{kj}(\x,\z):=
\left\|\L_j(\z+t_k(\x-\z))^\top(\x-\z)\right\|^2$. Notice that, for fixed $\z$, $\sigma_{kj}(\cdot,\z)$ is SOS and belongs to
$\Sigma[\x]_{a_j+1}$. 
\\
For $j\geq1$, let us now define the line-segment remainder
\begin{eqnarray}\label{q-kj}
q_{kj}(\x,\z):=
g_j((1-t_k)\z+t_k\x)
-t_kg_j(\x)-(1-t_k)g_j(\z).
\end{eqnarray}
Because $-g_j$ is SOS-convex, the equivalent Jensen characterization in
\cite[Theorem~3.1]{Ahmadi2}, applied with weight $t_k$, gives
$q_{kj}\in\Sigma[\x,\z]_{d_j}$. We therefore obtain
\begin{eqnarray}
 \label{taylor-sum2}
f(\x)-f(\z)-\nabla f(\z)^\top(\x-\z)
=\sum_{k=1}^s\gamma_k(1-t_k)
\sum_{j=0}^{m+1}\sigma_{kj}(\x,\z)
(q_{kj}(\x,\z)+t_kg_j(\x)+(1-t_k)g_j(\z)).  
 \end{eqnarray}
 Recall that, by hypothesis, $\bphi$ is feasible for the relaxation of order $n$, with $\displaystyle n\geq 1+\max\left\{\max_{j\in\{1,\dots,m+1\}}\{a_j+d_j\},a_0\right\}$. We now apply $\phi$ in the $\x$ variables, with
$\z$ fixed, to deduce that every term on the right-hand side of \eqref{taylor-sum2} is nonnegative:
\begin{enumerate}
\item For $j\geq1$, our assumption gives
$a_j+1\leq n-d_j$. Thus, the localizing constraint implies
$\phi(\sigma_{kj}g_j)\geq0$. For $j=0$, the same conclusion follows from
$n\geq a_0+1$ and $\M_n(\bphi)\succeq{\bf 0}$.
\item By Remark \ref{rem1}, when
$\z=\phi(\x)$ one has $g_j(\z)\geq0$. Moreover,
$\phi(\sigma_{kj})\geq0$ because $n\geq a_j+1$.
\item The product of two SOS polynomials is SOS. Hence,
$\sigma_{kj}q_{kj}\in
\Sigma[\x]_{a_j+d_j+1}$ after $\z$ is fixed, and consequently
$\phi(\sigma_{kj}q_{kj})\geq0$. 
\end{enumerate}
Substituting $\z=\phi(\x)$, we get $\phi(\nabla f(\z)^\top(\x-\z))
=\nabla f(\z)^\top(\phi(\x)-\z)=0$, and since all remaining terms on the right-hand side of  \eqref{taylor-sum2} are nonnegative by the three
points above, we obtain $\phi(f)-f(\phi(\x))\geq0$, which is the desired result.
 \end{proof}}
   
   \subsection{Finite convergence of the Moment--SOS hierarchy}
Recall that, under Assumption \ref{ass-1},
 the quadratic module \eqref{quad-module} associated with $\g$ is Archimedean, and that the $n$-th moment semidefinite relaxation of POP \eqref{def-pb} has the following form:
\begin{eqnarray}
 \label{relax}
  \rho_n =\displaystyle\min_{{\bphi}\in\R^{\N^d_{2n}}}\,\{\,\phi(f):\:\phi(1)=1,\:\M_{n-d_j}(g_j\cdot\bphi)\,\succeq {\bf 0},\, j\in\{0,\ldots,m+1\}\}.
 \end{eqnarray}
 {\color{black}
\begin{thm}
\label{th1}
 Let Assumption \ref{ass-1} hold.  
 Then, for every $\displaystyle n\geq n_{\min}:=\max\left\{\max_{j\in\{0,\ldots,m+1\}}d_j,d_f\}\right\}$, the semidefinite relaxation \eqref{relax} has an optimal solution $\bphi^{(n)}$. In addition, if $\displaystyle n\geq 1+\max\left\{\max_{j\in\{1,\dots,m+1\}}\{a_j+d_j\},a_0\right\}$, we have $\rho_n=f^*=f(\phi^{(n)}(\x))$, i.e., the point $\phi^{(n)}(\x)=(\phi^n(x_1),\dots,\phi^n(x_d))\in\om$ is globally optimal in \eqref{def-pb}.
  \end{thm}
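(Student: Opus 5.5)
The proof has two parts: existence of an optimal solution for every $n\geq n_{\min}$, and exactness for $n$ beyond the stated threshold, the latter obtained directly from Theorem \ref{lem1}.

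\textbf{Existence.} I will show that the feasible set of \eqref{relax} is nonempty, closed and bounded; the linear objective $\phi(f)$ then attains its minimum.

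Nonemptiness: since $\om$ is nonempty (compact, and assumed feasible), the moment vector of a Dirac measure $\delta_{\x_0}$ with $\x_0\in\om$ is feasible.

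Closedness: the constraints are linear equalities together with positive semidefiniteness of matrices depending linearly on $\bphi$.

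Boundedness: this is where $g_{m+1}=1-\Vert\x\Vert^2$ is used. The constraint $\M_{n-1}(g_{m+1}\cdot\bphi)\succeq{\bf 0}$, together with $\M_n(\bphi)\succeq{\bf 0}$, gives by the standard argument
\[
\phi(\Vert\x\Vert^{2k})\leq\phi(\Vert\x\Vert^{2k-2})\leq\cdots\leq\phi(1)=1,\qquad k\leq n.
\]
To see this, test the localizing matrix with the polynomial $\Vert\x\Vert^{2(k-1)}$ written as a sum of squares of monomials, i.e. take diagonal entries and sum them. Hence the diagonal entries $\phi(\x^{2\balpha})$ with $|\balpha|\leq n$ are bounded by $1$. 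Positive semidefiniteness of $\M_n(\bphi)$ then bounds every off-diagonal entry by the corresponding diagonal entries, so all moments up to degree $2n$ are bounded. I regard this step as routine, though it is the only point needing care with degrees: $d_{m+1}=1$, so the localizing matrix has order $n-1\geq0$.

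\textbf{Exactness.} Let $n\geq 1+\max\{\max_{j\geq1}(a_j+d_j),a_0\}$ and let $\bphi^{(n)}$ be an optimal solution, which exists by the first part.

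1. By Remark \ref{rem1}, the point $\z:=\phi^{(n)}(\x)$ lies in $\om$, so $f(\z)\geq f^*$.
2. Since $\bphi^{(n)}$ satisfies \eqref{on-phi}, Theorem \ref{lem1} gives $\rho_n=\phi^{(n)}(f)\geq f(\z)$.
3. Always $\rho_n\leq f^*$, because the moment vector of $\delta_{\x^*}$ for a global minimizer $\x^*$ is feasible.

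Chaining these,
\[
f^*\geq\rho_n\geq f(\z)\geq f^*,
\]
so all inequalities are equalities. Hence $\rho_n=f^*=f(\phi^{(n)}(\x))$ and $\phi^{(n)}(\x)$ is a global minimizer.

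The substantive work is all in Theorem \ref{lem1}. The only potential subtlety is that it requires $n\geq n_{\min}$ so that the relaxation is well defined. I would check that the stated threshold dominates $d_f$; if it does not, I would assume additionally $n\geq n_{\min}$.
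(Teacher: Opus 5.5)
Your proposal is correct and follows essentially the same route as the paper: compactness of the feasible set via the ball constraint $g_{m+1}$, then the sandwich $f^*\le f(\phi^{(n)}(\x))\le\phi^{(n)}(f)=\rho_n\le f^*$ from Remark~\ref{rem1} and Theorem~\ref{lem1}. The paper bounds $\phi(x_i^{2n})$ and cites a standard moment lemma, whereas you obtain $\vert\phi_{\balpha}\vert\le 1$ directly from the chain $\phi(\Vert\x\Vert^{2k})\le\phi(\Vert\x\Vert^{2k-2})$. Your closing caveat also resolves itself: comparing degrees in \eqref{def-sos-om} gives $\deg f\le 2+2\max\{a_0,\max_{j\ge1}(a_j+d_j)\}$, so the exactness threshold already dominates $d_f$.
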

 \begin{proof}
 Recall that $g_{m+1}(\x)=1-\Vert\x\Vert^2$ and that $\M_{n-d_{m+1}}(g_{m+1}\cdot\bphi)\succeq {\bf 0}$. This implies that $\phi(x_i^{2n})\leq 1$ for every  $i\in\{1,\ldots,d\}$. Hence, by \cite[Proposition  2.38]{lass-CUP},
 \begin{eqnarray}
\vert\phi_\balpha\vert\,\leq\,\max\left\{1,\max_{i\in\{1,\dots,d\}}\phi(x_i^{2n})\right\}\,\leq\, 1,\quad \forall \balpha\in\N^d_{2n}.
 \end{eqnarray}
Therefore, the feasible set of \eqref{relax} is compact, implying that \eqref{relax} has an optimal solution $\bphi^{(n)}\in\R^{\N^d_{2n}}$.
By construction, $\rho_n\leq f^*$ for all $n\geq n_{\min}$. 
\hfill\break
Next let $\bphi^{(n)}$ be an optimal solution, hence
 satisfying  $\phi^{(n)}(f)=\rho_n\leq f^*$. By Remark \ref{rem1}, 
 $\bphi^{(n)}\in\om$, which in turn implies $f^*\leq f(\phi^{(n)}(\x))$. \\
 Finally, for $\displaystyle n\geq 1+\max\left\{\max_{j\in\{1,\dots,m+1\}}\{a_j+d_j\},a_0\right\}$, Theorem \ref{lem1} guarantees that
 \begin{eqnarray}
     f^*\,\leq\,f(\phi^{(n)}(\x))\,\leq \phi^{(n)}(f)\,=\,\rho_{n}\leq\,f^*,
 \end{eqnarray}
 which yields the desired result.
 \end{proof}}
\noindent
{\color{black}
When solving \eqref{relax} at step $n\geq n_{\min}$ of  the Moment--SOS hierarchy, 
it is always recommended to check whether 
$f(\phi^{(n)}(\x))=\rho_n$ at an optimal solution $\bphi^{(n)}$. 
If Assumption \ref{ass-1} holds, then by Theorem \ref{th1}, this equality is guaranteed to occur for some finite $n$ large enough. We emphasize that this finite convergence result holds for the \textit{standard} Moment--SOS hierarchy whose description does not involve any information on this hidden convexity.}
\hfill\break\\
Finally, we give a toy example that illustrates the main notions and concepts introduced above. 
\begin{ex}[Illustration when $d=2$] Consider the following optimization problem
\begin{eqnarray}
 \label{illustration}
 f^*\,=\,\min_{\x\in\om:=\{\x\in\R^2\, :\, \underbrace{1-x_1^2-x_2^2}_{:=g_1(\x)}\geq 0\}} \,f(\x):=\frac16 x_1^3+\frac12 x_1^2+x_1+\frac12 x_2^2.
\end{eqnarray}
Observe that the quadratic constraint $g_1$ is SOS-concave, therefore, $\om$ is convex, but also compact. In addition, $Q(\g)$ is Archimedean by construction. A simple calculation shows that 
\begin{eqnarray}
    \nabla^2 f(\x)=
\begin{bmatrix}
x_1+1 & 0\\
0 & 1
\end{bmatrix}, \quad\forall\x\in\R^2.
\end{eqnarray}
The function $f$ is clearly not convex on $\R^2$. 
Next, define the SOS polynomials $\sigma_0:\x\mapsto\frac12(x_1+1)^2+\frac12 x_2^2 \in \Sigma[\x]_1$ and $ \sigma_1:\x\mapsto\frac12\in\Sigma[\x]_0.$
Then the following identity holds \emph{exactly}:
\begin{eqnarray}
    x_1+1
= \sigma_0(\x)+g_1(\x)\sigma_1(\x),\, \forall\x\in\R^2.
\end{eqnarray}
Consequently,
\begin{eqnarray}
    \nabla^2 f(\x)
&&=
\bS_0(\x)\,\underbrace{g_0(\x)}_{:=1}\;+\;\bS_1(\x)\,g_1(\x)=\begin{bmatrix}\sigma_0(\x)&0\\0&1\end{bmatrix}+\begin{bmatrix}\sigma_1(\x)&0\\0&0\end{bmatrix}g_1(\x)\\
&&= \L_0\L_0^\top+\L_1\L_1^\top g_1(\x), \quad\text{with}\quad\L_0(\x):=
\begin{bmatrix}
\frac{1+x_1}{\sqrt2} & \frac{x_2}{\sqrt2} & 0\\[1mm]
0 & 0 & 1
\end{bmatrix},
\,
\L_1(\x):=
\begin{bmatrix}
\frac{1}{\sqrt2}\\[1mm]
0
\end{bmatrix},
\end{eqnarray}
which certifies that $f$ is \srecko{Hessian-$Q$-module convex over $\om$}.
Finally, since $\deg(g_1)=2$, $a_0=1$, and $a_1=0$, we deduce that the moment semidefinite relaxation for the problem \eqref{illustration} will be exact as soon as $n\geq 1+\max\{a_1+1,a_0\}\iff n\geq 2$. 
Indeed, we have,
$-\frac23=\rho_2=f^\star=f(\phi^{(2)}(\x))$ with $\phi^{(2)}(\x)=[-1,0]^\top\in\om$.
\end{ex}
\srecko{\subsection{Discussion}\label{sec: discussion}
We next recall the $Q$-module convexity introduced by Nie
\cite{Nie-2012}. Let $f\in\R[\x]$ and define the first order remainder via:
\begin{eqnarray}
    R_f(\x,\y):=
f(\x)-f(\y)-\nabla f(\y)^\top(\x-\y),\quad \x,\y\in\R^d.
\end{eqnarray}
Then, $f$ is $Q$-module convex over $\om$ if the following representation holds: 
\begin{eqnarray}\label{qmodule-remainder}
R_f(\x,\y)=\sum_{i,j=0}^{m+1}
g_i(\x)g_j(\y)\sigma_{ij}(\x,\y),
\qquad
\sigma_{ij}\in\Sigma[\x,\y].
\end{eqnarray}
This two-variable remainder certificate is related to, but in general \textit{not}
equivalent to, the one-variable Hessian certificate
\eqref{def-sos-om}. Under additional algebraic assumptions on the
constraints, the Hessian certificate implies $Q$-module convexity \cite[Theorem 2.4]{Nie-2012}. This is
the case, in particular, if $\om$ admits a polyhedral description or a suitable
quadratic SOS-concave description.
\hfill\break\\
We emphasize that, although the notion of $Q$-module convexity has not been used in optimization, the analogue of Theorem~\ref{th1} can still be proved by replacing
Assumption~\ref{ass-1} with a degree-bounded representation of the form
\eqref{qmodule-remainder}. Indeed, let $\bphi$ be feasible for an order high
enough to evaluate every term in that representation. Consider its associated Riesz
functional in the $\x$ variables, and set $\y:=\phi(\x)$
{\color{black} (hence a fixed constant vector) to now consider $\x\mapsto R_f(\x,\y)$ as a polynomial in $\x$. Recall that $\y=\phi(\x)\in\om$. 
So in evaluating $\phi(R_f(\cdot,\y))$, feasibility of
$\y=\phi(\x)$ and positivity of the localizing matrices would then give
\begin{eqnarray}
    \phi(f)-f(\phi(\x))\,=\,\phi(f)-f(\y)
=\sum_{i,j=0}^{m+1}\underbrace{g_j(\y)}_{\geq0}\,
\,\underbrace{\phi\bigl(g_i\sigma_{ij}(\cdot,\y)\bigr)}_{\geq0}
\geq0.
\end{eqnarray}
}
The same chain of inequalities as in the proof of Theorem~\ref{th1} yields
finite convergence and recovery of a global minimizer. 
\hfill\break\\
We nevertheless prefer to use \eqref{def-sos-om} {\color{black}(rather than \eqref{qmodule-remainder})} as the main convexity assumption. The reason is that it is naturally verified for every objective that is
strongly convex on $\om$: the matrix Putinar Positivstellensatz supplies the
required Hessian representation. In contrast, $R_f$ can never be strictly positive (when $\x=\y$, we have $R_f(\x,\y)=0$), and therefore, Putinar Positivstellensatz does not apply.
\hfill\break\\
Lastly, it would be interesting to investigate whether analogous finite-convergence guarantees remain valid when $\om$ is convex but this convexity is not apparent from its polynomial description. A natural direction would be to replace global SOS-concavity of the constraint-defining polynomials by Hessian-$Q$-module convexity of each $-g_j$ over $\om$, or another appropriate notion of local convexity. We leave this extension for future work.
}
\section{Conclusion}
\srecko{We have proved finite convergence of the standard Moment--SOS hierarchy for
polynomial optimization problems whose constraints are SOS-concave and whose
objective admits a \textit{hidden} Hessian quadratic-module certificate on the feasible set.
The objective need not be convex on the ambient space. The proof applies to
SOS-concave constraints of arbitrary degree and gives the explicit exactness
order.}
\\
As this hidden convexity is hard to check,
any minimizer on $\om$ (even global) provided by a local algorithm 
is lacking a guarantee  to be a global minimizer. 
Our results complement existing work on convex POPs and highlight a remarkable aspect of the Moment–SOS hierarchy: it can blindly adapt to convex structure in the instance at hand, even when this structure is neither known nor identified in advance. Future work could investigate whether this phenomenon persists for structured variants of the Moment--SOS hierarchy designed to exploit structural properties of considered problems, such as symmetries or different sparsity types.

\end{document}